\newcommand{\sign}{\mathrm{sign}}
\newtheorem{theorem}{Theorem}[section]
\newtheorem{example}[theorem]{Example}
\newtheorem{proposition}[theorem]{Proposition}
\newtheorem{remark}[theorem]{Remark}
\title{Three essays on stopping}
\author{Eberhard Mayerhofer\thanks{University of Limerick, Department of Mathematics and Statistics, V94TP9X, Limerick, Ireland, email {eberhard.mayerhofer@ul.ie}. For discussions on this paper I think John Appleby, Florin Avram, Friedrich Hubalek, and Andreas Kyprianou.}}
\begin{document}
\maketitle

\vspace{-1cm}
%\begin{abstract}
%%This paper comprises three topics from stopping theory, where the exponential distribution plays a key role.
%
%\end{abstract}

\textbf{MSC (2010): 60J65, 60J75}
\smallskip

\textbf{Keywords:} Reflected Brownian motion with drift, drawdown, spectrally negative L\'evy processes
\begin{abstract}

First, we give a closed-form formula for first passage time of a reflected Brownian motion with drift. This corrects a formula by \cite{perry2004first}.

Second, we show that the maximum before a fixed drawdown is exponentially distributed for any drawdown threshold, if and only if the diffusion characteristic $\mu/\sigma^2$ is constant. This complements the sufficient condition formulated by \cite{lehoczky1977formulas}.

Third, we give an alternative proof for the fact that the maximum before a fixed drawdown threshold is exponentially distributed for any spectrally negative L\'evy process, a result due to \cite{mijatovic2012drawdown}.
\end{abstract}

\tableofcontents
\newpage

This paper comprises three essays on stopping.

In section 1, we compute the Laplace transform of the first hitting time of a fixed upper barrier for a reflected Brownian motion with drift. This expands on, and corrects a result by \cite{perry2004first}.

In section 2 we show, by using an intrinsic delay differential equation, that for a diffusion process, the maximum before a fixed drawdown threshold is generically exponentially distributed, only if the diffusion characteristic $\mu/\sigma^2$ is constant. This complements the sufficient condition formulated by \cite{lehoczky1977formulas}. We further construct diffusions, where the exponential law only holds for specific drawdown sizes.

Section 3 uses \cite{lehoczky1977formulas}'s argument to show that the maximum before a fixed drawdown threshold is exponentially distributed for any spectrally negative L\'evy process, the parameter being the right-sided logarithmic derivative of the scale function. This yields an alternative proof to the original one in \cite{mijatovic2012drawdown}.

\section{The first hitting time for a reflected Brownian motion with drift}
Let $X$ be a reflected Brownian motion on $[0, \infty)$, with drift $\mu$ and volatility $\sigma$. By \cite{graversen2000extension} the RBM($\mu,\sigma^2$) can be realized as $\vert \xi_t^x\vert$,
where $\xi^x$ is the unique strong solution of 
\[
d\xi_t=\mu\,\sign(\xi_t)dt+\sigma \,dB_t, \quad \xi_0=x,
\]
where $B$ is a standard Brownian motion.\footnote{This is the generalization of L\'evy's result for (driftless) reflected Brownian motion, which states that RBM($\mu=0,\sigma^2=1$) is equal in law to $\vert x+B\vert $.)} We therefore assume, in the following, a filtered probability space given that supports $B$, and identify $X=(X_t)_{t\geq 0}$ with the $X_t=\vert \xi_t^x\vert$, $x\geq 0$. By Tanaka's formula, we have
\[
X_t=\vert \xi_t\vert=x+\int_0^t \sign(\xi_t)d\xi_t+L_t^0(X)=x+\mu t+\sigma\int_0^t \sign(\xi_t)dB_t+L_t^0(X),
\]
where $L^0(X)$ is the local time of $X$ at $0$. Since the latter is supported on $\{X=0\}$, It\^o's formula implies for any $f\in C^2_b((0,\infty))$, for which $f'(0+)=0$, the process
\[
f(X_t)-f(x)-\int_0^t\mathcal Af(X_s)ds,
\]
is a martingale, where $\mathcal A$ is the differential operator, defined by $\mathcal Af=\frac{\sigma^2}{2}f''(x)+\mu f'(x)$.\footnote{In the language of linear diffusions \cite{borodin2012handbook}, $X$ has infinitesimal generator $\mathcal A$ acting on $\mathcal D(\mathcal A)=\{f\in C^2_b((0,\infty)\mid f'(0+)=0\}$.}

Since, before reaching the boundary $0$, the process cannot be distinguished from a Brownian motion with drift, for $0<\delta+x<x$, the first hitting time
\begin{equation}\label{eq: stopping time}
\tau_\delta:=\inf\{t\geq 0\mid X_t=\delta+x\}
\end{equation}
equals, in distribution, to the first hitting time $\tau_\delta$ of a Brownian motion with drift, starting at $x$. 

Therefore, we may confine ourselves to computing $\tau_\delta$ for barriers $\delta+x$, where $\delta>0$. Our aim is to compute the Laplace transform
\[
\Psi(\theta;\delta,x):=\mathbb E[e^{-\theta \tau_\delta}\mid X_0=x],\quad \theta\geq 0.
\]

%For convenience of notation, we introduce the ``location ratio" LR and the``Sharpe ratio" SR, abbreviated as
%\[
%\text{LR}(\xi):=\frac{\xi}{\vert\sigma\vert}, \quad \text{SR}:=\frac{\mu}{\vert \sigma\vert}.
%\]
\begin{theorem}\label{th: main}
For $\delta\geq 0$, the Laplace transform of the first hitting time of a reflected Brownian motion with drift $\mu$ and volatility $\sigma$
is given by
\begin{equation}\label{eq: LT hitting timex}
\Psi(\theta; x,\delta):=e^{\frac{\delta\mu}{\sigma^2}}\frac{\sqrt{\mu^2+2\theta\sigma^2}\cosh\left(\frac{x\sqrt{\mu^2+2\theta\sigma^2}}{\sigma^2}\right)+\mu \sinh\left(\frac{x\sqrt{\mu^2+2\theta\sigma^2}}{\sigma^2}\right)}{\sqrt{\mu^2+2\theta\sigma^2}\cosh\left(\frac{(x+\delta)\sqrt{\mu^2+2\theta\sigma^2}}{\sigma^2}\right)+\mu \sinh\left(\frac{(x+\delta)\sqrt{\mu^2+2\theta\sigma^2}}{\sigma^2}\right)}
\end{equation}
\end{theorem}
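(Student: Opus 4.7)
The plan is a standard Feynman--Kac / martingale argument: find a function $f$ on $[0,\infty)$ solving $\mathcal A f=\theta f$ with $f'(0+)=0$, so that $e^{-\theta t}f(X_t)$ is a local martingale; optional stopping at $\tau_\delta$ then gives $\Psi(\theta;x,\delta)=f(x)/f(x+\delta)$, after which only algebraic simplification remains.

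I would begin by solving the eigenvalue ODE $\tfrac{\sigma^2}{2}f''+\mu f'=\theta f$ on $(0,\infty)$. Writing $\gamma:=\sqrt{\mu^2+2\theta\sigma^2}$, the characteristic roots are $\lambda_\pm=(-\mu\pm\gamma)/\sigma^2$, so every solution has the form $Ae^{\lambda_+ x}+Be^{\lambda_- x}$. Imposing the Neumann condition $f'(0+)=0$ fixes the ratio $A:B$, and regrouping the exponentials into hyperbolic functions yields, up to a multiplicative constant,
\[
  f(x)=e^{-\mu x/\sigma^2}\bigl[\gamma\cosh(\gamma x/\sigma^2)+\mu\sinh(\gamma x/\sigma^2)\bigr],
\]
which is smooth and strictly positive on $[0,\infty)$.

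Next, the It\^o--Tanaka computation recalled in the excerpt shows that $M_t:=e^{-\theta t}f(X_t)$ is a local martingale on $[0,\tau_\delta]$. Stopped at $\tau_\delta$, the process $X$ stays in $[0,x+\delta]$, on which $f$ is bounded, so $M_{t\wedge\tau_\delta}$ is uniformly bounded. (A small technical point: the excerpt states the martingale property for $f\in\mathcal D(\mathcal A)\subset C^2_b((0,\infty))$, whereas my eigenfunction may grow at $+\infty$ when $\lambda_+>0$; this is handled by truncating $f$ smoothly outside $[0,x+\delta]$, which does not alter $\mathcal A f$ on the relevant window.) Since an RBM with drift hits every finite level in $[0,\infty)$ almost surely (recurrence for $\mu\le 0$, upward transience for $\mu>0$), we have $\tau_\delta<\infty$ a.s., and optional stopping together with bounded convergence yields
\[
  f(x)=\mathbb E_x[e^{-\theta\tau_\delta}]\,f(x+\delta).
\]
The prefactor $e^{-\mu x/\sigma^2}/e^{-\mu(x+\delta)/\sigma^2}$ collapses to $e^{\mu\delta/\sigma^2}$, producing exactly \eqref{eq: LT hitting timex}.

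The main obstacle I expect is bookkeeping rather than substance: one has to keep the signs of $\mu$ straight while rewriting the exponential eigenfunctions as $\gamma\cosh+\mu\sinh$ (a sign error at the Neumann step would swap $\gamma$ and $\mu$ inside the bracket and break the identification with \eqref{eq: LT hitting timex}). The probabilistic ingredients---a.s.\ finiteness of $\tau_\delta$, uniform boundedness of the stopped martingale, and optional stopping---are routine for a diffusion confined to a bounded interval, and the truncation needed to match $\mathcal D(\mathcal A)$ is entirely local and does not affect the calculation before $\tau_\delta$.
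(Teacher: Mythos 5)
Your proof is correct, but it takes a genuinely different route from the paper's. You construct the (up to scaling, unique) eigenfunction $f$ of $\mathcal A$ for the eigenvalue $\theta$ satisfying the Neumann condition $f'(0+)=0$, namely $f(x)=e^{-\mu x/\sigma^2}\bigl[\gamma\cosh(\gamma x/\sigma^2)+\mu\sinh(\gamma x/\sigma^2)\bigr]$ with $\gamma=\sqrt{\mu^2+2\theta\sigma^2}$, so that the stopped process $e^{-\theta(t\wedge\tau_\delta)}f(X_{t\wedge\tau_\delta})$ is a bounded martingale and the Laplace transform drops out as the single ratio $f(x)/f(x+\delta)$. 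The paper instead uses the test functions $e^{-\kappa_\pm x}+\kappa_\pm x$: the exponentials $e^{-\kappa_\pm x}$ are exactly the two eigenfunctions spanning your solution space, but individually they violate the Neumann condition, which the author repairs by adding the linear term $\kappa x$; since that linear term is not an eigenfunction, an extra unknown moment $\mathbb E^x\bigl[\int_0^{\tau_\delta}e^{-\theta s}X_s\,ds\bigr]$ survives, and the result is obtained by solving the $2\times 2$ linear system coming from the two choices $\kappa=\kappa_\pm$. Your choice of basis eliminates the nuisance moment from the outset and is the more economical argument; the paper's version yields that auxiliary moment as a by-product. Your treatment of the technical points (local truncation of $f$ so as to land in $C^2_b$ with $f'(0+)=0$, boundedness of the stopped martingale on $[0,x+\delta]$, and a.s.\ finiteness of $\tau_\delta$, which is what is needed at $\theta=0$) plays the same role as the paper's cutoff $\Phi$, and the algebra indeed reproduces \eqref{eq: LT hitting timex}.
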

\begin{proof}
Pick $\Phi\in C_c^\infty(\mathbb R)$ such that $\Phi(\xi)=1$ for $\vert \xi\vert\leq x+\delta$. Furthermore, let $\kappa>0$, then for any $\theta\geq 0$ and $t\geq 0$, the function
\[
F(t,x):=e^{-\theta t} \Phi(x)\left(e^{-\kappa x}+\kappa x\right)
\]
satisfies $f:=F(t,\cdot)\in C^2_b$ and $f'(0)=0$.  According to the introductory notes of this section, the process $F(t,X_t)-\int_0^t \partial_s F(s,X_s)ds-\int_0^t \mathcal A F(s,X_s)ds$ is a uniformly bounded martingale, and therefore also the stopped process
\begin{align*}
&F(t, X_{t\wedge \tau_\delta})-(e^{-\kappa x}+\kappa x)-\int_0^{t\wedge \tau_\delta} \partial_t F(s,X_s)ds-\int_0^{t\wedge \tau_\delta} \mathcal A F(s,X_s)ds
\end{align*}
is a true martingale, which starts at zero, $\mathbb P^x$-almost surely. Using the fact that $\Phi(X_{t\wedge \tau_\delta})=1$, we find that the stopped process satisfies for any $t\geq 0$,
\begin{align*}
&e^{-\theta (t\wedge \tau_\delta)} \left(e^{-\kappa X_{t\wedge \tau_\delta}}+\kappa X_{t\wedge \tau_\delta}\right)-(e^{-\kappa x}+\kappa x)+\theta\int_0^{t\wedge \tau_\delta} e^{-\kappa X_s-\theta s} ds+\theta \kappa\int_0^{t\wedge \tau_\delta} e^{-\theta s}X_s ds\\&\qquad-\mu\int_0^{t\wedge \tau_\delta} \left( \kappa e^{-\theta s}-\kappa e^{-\kappa X_s-\theta s} \right)ds-\frac{\sigma^2\kappa^2}{2}\int _0^{t\wedge \tau_\delta} e^{-\kappa X_s-\theta s} ds\\&\qquad=
e^{-\theta (t\wedge \tau_\delta)} \left(e^{-\kappa X_{t\wedge \tau_\delta}}+\kappa X_{t\wedge \tau_\delta}\right)-(e^{-\kappa x}+\kappa x)+\theta \kappa\int_0^{t\wedge \tau_\delta} e^{-\theta s}X_s ds\\&\qquad-\frac{\mu\kappa}{\theta}\left(1-e^{-\theta (t\wedge \tau_\delta)}\right)-\left(\frac{\sigma^2 \kappa^2}{2}-\kappa \mu-\theta\right)\int_0^{t\wedge \tau_\delta} e^{-\kappa X_s-\theta s} ds.
\end{align*}
Letting $t\rightarrow \infty$, we thus get by optional sampling,
\begin{align*}
&(e^{-\kappa (x+\delta)}+\kappa (x+\delta))\mathbb E^x[e^{-\theta \tau_\delta}]-(e^{-\kappa x}+\kappa x)+\theta \kappa\mathbb E^x\left[\int _0^{\tau_\delta} e^{-\theta s}X_s ds\right] \\&-\frac{\mu \kappa}{\theta}(1-\mathbb E^x[e^{-\theta \tau_\delta}])-\left(\frac{\sigma^2 \kappa^2}{2}-\kappa \mu-\theta\right)\mathbb E^x\left[\int_0^{-\tau_\delta} e^{-\kappa X_s-\theta s} ds \right]=0.
\end{align*}
For the two choices $\kappa\in \{\kappa_-,\kappa_+\}$, where
\begin{equation}\label{eq: superalpha}
\kappa_\pm:=\frac{\mu\pm\sqrt{\mu^2+2\theta\sigma^2}}{\sigma^2},
\end{equation}
we thus obtain two equations, for two unknown moments,
\[
\left(e^{-\kappa_\pm(x+\delta)}+\kappa_\pm (x+\delta)+\frac{\mu \kappa_\pm}{\theta}\right)\mathbb E^x[e^{-\theta\tau_\delta}]+\theta\kappa_\pm \mathbb E^x\left[\int _0^\tau e^{-\theta s}X_s ds\right]= (e^{-\kappa_\pm x}+\kappa x)+\frac{\mu\kappa_\pm}{\theta}.
\]
Solving this linear system for the involved moments yields the Laplace transform
of $\tau_\delta$, equation \eqref{eq: LT hitting timex}.
\end{proof}
\subsubsection{Sanity Check: driftless case}
For a first ``sanity check" of Theorem \ref{th: main}, we compute the LT \eqref{eq: LT hitting timex}
independently when $\mu=0$ and $x=0$. In this case, the reflected Brownian motion is equal to $\vert \sigma B\vert$ in law, where $B$
is a standard Brownian motion. But then $\tau_\delta$ equals, in distribution, to
\[
\widetilde\tau_\delta:=\inf\{s>0\mid B_s\in \{\pm\frac{\delta}{\sigma}\}\}.
\]
Now it is well known that the Laplace transform of $\widetilde\tau_\delta$ is given by
\begin{equation}\label{eq: driftless RBM}
\mathbb E^x[e^{-\widetilde\theta \tau_\delta}]=\frac{1}{\cosh(\frac{\delta}{\sigma}\sqrt{2\theta})}
\end{equation}
which indeed coincides with \eqref{eq: LT hitting timex} for $\mu\rightarrow 0$ (that is, zero Sharpe Ratio, zero absorption and null killing).

\subsubsection{Remarks on \cite{perry2004first}}\label{subsec: perry}
\cite[Formula (5.2)]{perry2004first} state a different Laplace transforms than our Theorem \ref{th: main}. Letting $\mu\rightarrow 0$
in \cite[Formula (5.2)]{perry2004first} indeed yields $(\sigma^2=1)$
\[
\mathbb E^x[e^{-\theta\tau_\delta}]=\frac{1}{\cosh(\delta\sqrt{\theta})}
\] 
which contradicts \eqref{eq: driftless RBM}. The proof of \cite[Lemma 5.1]{perry2004first}
can however not be rectified, by merely fixing the (obviously) missing factor of $1/2$. Indeed, in the second line of their proof, they forget a factor $e^{-\kappa W(s)}$ in the second integrand, and thus by inserting special values of $\kappa$ into the process in line 2, one does not get rid of the local-time term, as claimed. 
\section{Diffusions with exponentially distributed gains before fixed drawdowns}
Let $X$ be a diffusion process on the $[-a,\infty)$, satisfying the SDE
\begin{equation}\label{eq: tara}
dX_t=\mu(X_t)dt+\sigma(X_t)dW_t,\quad X_0=0,
\end{equation}
where $\mu(x)$ and $\sigma(x)$ are locally Lipschitz continuous functions of linear growth on $[-a,\infty)$, and $\sigma(x)>0$ thereon.

For a threshold $0<\delta\leq a$, we define $M^\delta$ as the maximum of $X$, prior to a drawdown of size $\delta$, that is
\[
M^\delta=M(\tau^\delta),\quad\text{where}\quad M(t):=\max_{s\leq t}X_s,\quad\text{and}\quad \tau^\delta:=\inf\{t>0\mid M_t-X_t=\delta\}.
\]
We use the abbreviation $\Phi(x):=e^{-2\int_0^x \gamma(u)du}$, where $\gamma(x)=a(x)/\sigma^2(x)$. The following is due to \cite{lehoczky1977formulas}:
\begin{proposition}\label{Lehozky}
\begin{equation}\label{eq: overshoot law}
\log\mathbb P[M^\delta\geq \xi]=-\int_0^\xi \frac{\Phi(u)}{\int _{u-\delta}^u \Phi(s)ds} du,\quad \xi\geq 0.
\end{equation}
\end{proposition}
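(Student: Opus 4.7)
The plan is to follow Lehoczky's original strong-Markov/scale-function strategy. The event $\{M^\delta\geq \xi\}$ requires that, for every intermediate level $u\in[0,\xi]$ that the running maximum successively passes through, the diffusion next climbs higher rather than first suffering a drop of size $\delta$. This is naturally expressed by partitioning $[0,\xi]$ into $n$ equal pieces $u_k=kh$, $h=\xi/n$, and using the strong Markov property at the hitting times $T_{u_k}=\inf\{t:X_t=u_k\}$.

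First I would recall the scale-function form of the two-sided exit probability for \eqref{eq: tara}. The equation $\mathcal{A}S=\tfrac12\sigma^2(x)S''+\mu(x)S'=0$ gives $S'(x)=\Phi(x)$ (up to an affine change), hence for $a<u<b$
\begin{equation}
\mathbb{P}_u[\tau_b<\tau_a]=\frac{\int_a^u\Phi(s)\,ds}{\int_a^b\Phi(s)\,ds}.
\end{equation}
Applied with $a=u_k-\delta$, $u=u_k$, $b=u_{k+1}$, this yields the probability $p_k$ that, starting from level $u_k$, the diffusion reaches $u_{k+1}$ before it undergoes a drawdown of size $\delta$ from $u_k$:
\begin{equation}
p_k=\frac{\int_{u_k-\delta}^{u_k}\Phi(s)\,ds}{\int_{u_k-\delta}^{u_{k+1}}\Phi(s)\,ds}.
\end{equation}

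Next I would apply the strong Markov property at the successive hitting times $T_{u_0},T_{u_1},\dots$ On the event that the running maximum reaches $u_{k+1}$ without a preceding drop of $\delta$, the level $u_{k+1}$ is necessarily attained from $u_k$ without the process dropping by more than $\delta$ from $u_k$ in between (since between $T_{u_k}$ and $T_{u_{k+1}}$ the running max sits at $u_k$). Conversely, if each such crossing occurs without a drop of $\delta$ from the running max, then $M^\delta\geq \xi$. This gives the exact factorisation
\begin{equation}
\mathbb{P}[M^\delta\geq \xi]=\prod_{k=0}^{n-1}p_k,
\end{equation}
and therefore
\begin{equation}
\log\mathbb{P}[M^\delta\geq \xi]=\sum_{k=0}^{n-1}\log\!\left(1-\frac{\int_{u_k}^{u_{k+1}}\Phi(s)\,ds}{\int_{u_k-\delta}^{u_{k+1}}\Phi(s)\,ds}\right).
\end{equation}

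Finally I would pass to the limit $n\to\infty$. The argument of the logarithm is of order $h$, so $\log(1-x)=-x+O(x^2)$ gives, for each term,
\begin{equation}
\log p_k=-\frac{h\,\Phi(u_k)+o(h)}{\int_{u_k-\delta}^{u_k}\Phi(s)\,ds}+O(h^2),
\end{equation}
using continuity of $\Phi$ (which follows from the standing regularity of $\mu,\sigma$). The sum is then a Riemann sum for $-\int_0^\xi\Phi(u)\bigl/\int_{u-\delta}^u\Phi(s)\,ds\,du$, and the error terms sum to $O(h)$. The main technical step, and the only place one has to be careful, is to verify uniform boundedness away from $0$ of the denominator $\int_{u-\delta}^u\Phi(s)\,ds$ on $u\in[0,\xi]$, which holds because $\Phi$ is continuous and strictly positive. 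Passing to the limit yields \eqref{eq: overshoot law}.
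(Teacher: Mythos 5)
Your overall strategy is exactly Lehoczky's, which is also the strategy the paper itself uses: the paper does not reprove Proposition \ref{Lehozky} (it cites \cite{lehoczky1977formulas} and only discusses how to read that paper), but Section 3 replicates the same discretisation argument for spectrally negative L\'evy processes, so there is a direct in-paper template to compare against. The scale-function exit probability, the choice of levels $u_k-\delta<u_k<u_{k+1}$, and the Riemann-sum limit are all as they should be.

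There is, however, one step in your write-up that is wrong as stated: the claimed \emph{exact} factorisation $\mathbb P[M^\delta\geq\xi]=\prod_{k}p_k$ for a fixed finite $n$. Your justification rests on the parenthetical ``between $T_{u_k}$ and $T_{u_{k+1}}$ the running max sits at $u_k$'', which is false: on that interval the running maximum climbs continuously from $u_k$ up to $u_{k+1}$ as the path makes new highs. Consequently a drawdown of size $\delta$ can occur during the crossing without the path ever reaching $u_k-\delta$ (rise to $u_{k+1}-\varepsilon$, then fall by $\delta$), so the event $A_k=\{\tau_{u_{k+1}}<\tau_{u_k-\delta}\text{ after }T_{u_k}\}$ strictly contains the event ``no $\delta$-drawdown during the $k$-th crossing''. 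One only has $\{M^\delta\geq\xi\}\subseteq\bigcap_k A_k$, hence $\mathbb P[M^\delta\geq\xi]\leq\prod_k p_k$ for each $n$. The standard repair — and the one the paper uses in its Section 3 proof — is to take nested (e.g.\ dyadic) partitions, observe that the events $\bigcap_{k=1}^{2^m}A_{k,m}$ decrease in $m$ with intersection exactly $\{M^\delta\geq\xi\}$ (a path in every $\bigcap_k A_{k,m}$ has drawdown less than $\delta+\xi 2^{-m}$ before reaching $\xi$, for every $m$), and then identify $\mathbb P[M^\delta\geq\xi]$ with $\lim_{m\to\infty}\prod_k p_{k,m}$ by continuity of measure. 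Your Riemann-sum computation of that limit is correct and unaffected, so the proof closes once the equality for finite $n$ is replaced by this monotone approximation.
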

Caution is needed when interpreting the original paper \cite{lehoczky1977formulas}: Lehoczky uses the letter ``a" for three different objects: The drift $\mu(x)$ is denoted as $a(x)$, while $a$ is the left endpoint of the interval of the support of $X$; third, the threshold $\delta$ in his paper is also called $a$. An inspection of Lehozky's proof reveals that our more general version with $\delta\leq a$ holds.

In terms of diffusion characteristics, Lehoczky's result holds in a more general context. First, the assumption of locally Lipschitz coefficients are too strong, and can be relaxed. For example, we can relax to H\"older regularity of $\sigma(x)$ of order no worse than $1/2$, due to \cite{yamada1971uniqueness}. Also, we can allow reflecting or absorbing boundary conditions, thus include reflected diffusions. For instance, Proposition \ref{Lehozky} holds for a Brownian motion with drift, starting at $0$ and being reflected at $-a$, because, the process $X$ cannot hit $-a$, before it reaches a strictly positive maximum, due to strict positive volatility $\sigma(0)>0$.

From \eqref{eq: overshoot law} it can be seen that when $\mu/\sigma^2$ is constant, $M^\delta$ is exponentially distributed (the special case for for a Brownian motion with drift is due to \cite{Taylor}, and independently discovered by \cite{golub2016multi}). \cite{mijatovic2012drawdown} extended this result to spectrally negative L\'evy processes: For those, $M^\delta$ is also exponentially distributed, with the parameter being the right-sided logarithmic derivative of the scale function, evaluated at the drawdown threshold.

This section characterizes the exponential law for diffusions:
\begin{theorem}\label{th: mainx}
The following are equivalent:
\begin{enumerate}
\item \label{x1} $\mu(x)/\sigma^2(x)$ is a constant on $[-a,\infty)$.
\item For each $\delta>0$, $M^\delta$ is exponentially distributed.
\end{enumerate}
\end{theorem}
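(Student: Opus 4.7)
\emph{Direction \ref{x1} $\Rightarrow$ (2).} If $\gamma(x):=\mu(x)/\sigma^2(x)\equiv c$, then $\Phi(u)=e^{-2cu}$ and $\int_{u-\delta}^u \Phi(s)\,ds = e^{-2cu}(e^{2c\delta}-1)/(2c)$ (read as $\delta$ when $c=0$). The integrand in \eqref{eq: overshoot law} is therefore the $u$-independent constant $2c/(e^{2c\delta}-1)$, so $M^\delta$ is exponentially distributed.

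\emph{Direction (2) $\Rightarrow$ \ref{x1}.} Suppose $M^\delta$ is exponential for every $\delta\in(0,a]$. By Proposition~\ref{Lehozky} the integrand in \eqref{eq: overshoot law} is independent of $u$, so there exists $c(\delta)>0$ with
\begin{equation*}
\int_{u-\delta}^u \Phi(s)\,ds = c(\delta)\,\Phi(u),\qquad u\geq 0,\ \delta\in(0,a].
\end{equation*}
The plan is to differentiate this identity first in $u$, producing $\Phi(u)-\Phi(u-\delta)=c(\delta)\,\Phi'(u)$, and then in $\delta$, producing $\Phi'(u-\delta)=c'(\delta)\,\Phi'(u)$. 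Both operations are legitimate: $\Phi\in C^1$ because $\gamma$ is locally Lipschitz (since $\mu,\sigma$ are locally Lipschitz with $\sigma>0$), and the ratio $c(\delta)=(\Phi(u)-\Phi(u-\delta))/\Phi'(u)$ inherits $C^1$ regularity.

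Evaluating the last identity at $u=\delta$ expresses $c'(\delta)$ in terms of $\Phi'(\delta)$, and substituting back yields the multiplicative Cauchy equation
\[
\Phi'(0)\,\Phi'(v+\delta) = \Phi'(v)\,\Phi'(\delta),
\]
valid for $v,\delta$ in the natural range. By continuity of $\Phi'$, either $\Phi'\equiv 0$ (whence $\gamma\equiv 0$ and we are done) or $\Phi'(x)=\Phi'(0)\,e^{\lambda x}$ for some $\lambda\in\mathbb R$. Integrating and imposing $\Phi(0)=1$ leaves $\Phi(x)=A\,e^{\lambda x}+B$ with $A+B=1$ (or an affine $\Phi$ in the degenerate case $\lambda=0$). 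I would then substitute this form back into the original integral identity and match the $e^{\lambda u}$- and $u$-independent coefficients separately, using that the identity must hold for \emph{all} $\delta\in(0,a]$; this forces $B=0$, hence $\Phi(x)=e^{\lambda x}$, and consequently $\gamma(x)=-\Phi'(x)/(2\Phi(x))=-\lambda/2$ is constant.

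The step I expect to be most delicate is the last one. The differentiated delay relation is consistent with an arbitrary additive constant $B$ in $\Phi$, so eliminating $B$ requires returning to the undifferentiated integral identity, and the desired cancellation only happens because the identity must hold over a whole continuum of $\delta$'s rather than a single value.
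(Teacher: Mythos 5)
Your proposal is correct, and it follows the paper's overall strategy --- differentiate Lehoczky's identity \eqref{eq: overshoot law} in both $\xi$ and $\delta$ to reduce the problem to a classical functional equation --- but the key functional equation you land on is different. The paper stops at the relation $\Phi(\xi-\delta)=c'(\delta)\Phi(\xi)$ (in its notation, $\Phi(\xi-\delta)/\Phi(\xi)=-\Lambda'(\delta)/\Lambda^2(\delta)$), eliminates the $\delta$-dependent constant by writing the same relation at $\xi$ and $\xi+\delta$ and dividing, and thereby obtains Lobacevsky's equation $\Phi(\xi-\delta)\Phi(\xi+\delta)=\Phi(\xi)^2$ for $\Phi$ itself; this pins $\Phi$ down as a pure exponential in one stroke. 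You instead differentiate once more in $u$ and obtain the multiplicative Cauchy equation for $\Phi'$. That is perfectly valid, but it only determines $\Phi$ up to the additive constant $B$ in $\Phi(x)=Ae^{\lambda x}+B$, which is why you need the extra back-substitution into $\int_{u-\delta}^u\Phi(s)\,ds=c(\delta)\Phi(u)$. That step does work as you predict: matching coefficients gives $c(\delta)=(1-e^{-\lambda\delta})/\lambda$ from the $e^{\lambda u}$ term and $c(\delta)=\delta$ from the constant term, which are incompatible for $\lambda\neq 0$ unless $B=0$ (and the degenerate affine case $\lambda=0$, $\Phi'(0)\neq 0$ is excluded the same way); in fact a single $\delta$ already suffices here, so your worry that a continuum of $\delta$'s is needed at this last stage is unfounded --- the continuum was already spent in differentiating with respect to $\delta$. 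Two small points to tighten: justify $c\in C^1$ directly from $c(\delta)=\int_{u-\delta}^u\Phi(s)\,ds/\Phi(u)$ at a fixed $u$ rather than via the ratio $(\Phi(u)-\Phi(u-\delta))/\Phi'(u)$, which presupposes $\Phi'(u)\neq 0$; and note that if $\Phi'$ vanishes at one point then $\Phi'(u-\delta)=c'(\delta)\Phi'(u)$ with $c'(\delta)=\Phi(u-\delta)/\Phi(u)>0$ propagates the zero everywhere, so the dichotomy ``$\Phi'\equiv 0$ or $\Phi'$ nowhere zero'' is legitimate. Net assessment: your route is sound but slightly less economical than the paper's, the extra $u$-derivative discarding exactly the information that the Lobacevsky route retains.
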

\begin{proof}[Proof of the Theorem]
Sufficiency of the first condition for the second one follows directly from Proposition \ref{Lehozky}. 
Suppose, therefore that for each $0<\delta\leq a$, there exists $\Lambda(\delta)>0$ such that $M^\delta\sim \mathcal E(\Lambda(\delta))$. Then, due to \eqref{eq: overshoot law},
\begin{equation}\label{eq: basic}
\int_0^\xi \frac{\Phi(u)}{\int _{u-\delta}^u \Phi(s)ds} du=\Lambda(\delta)\xi,\quad \xi\geq 0,\quad \delta\leq a.
\end{equation}
By this particular functional form, and, since $\mu/\sigma^2$ is continuous, it follows that the functions $\Lambda(\delta)$
and $\Phi(x)$ are continuously differentiable. By differentiating \eqref{eq: basic} with respect to $\xi$, we have
\[
\Phi(\xi)=\Lambda(\delta)\int_{\xi-\delta}^\xi\Phi(u)du,\quad \xi\geq 0,\quad \delta\leq a,
\]
and differentiating with respect to $\delta$ yields, in conjunction with the previous identity,
\[
\frac{\Phi(\xi-\delta)}{\Phi(\xi)}=-\frac{\Lambda'(\delta)}{\Lambda^2(\delta)},\quad \xi\geq 0,\quad\delta\leq a.
\]
Therefore, also
\[
\frac{\Phi(\xi)}{\Phi(\xi+\delta)}=-\frac{\Lambda'(\delta)}{\Lambda^2(\delta)},\quad \xi\geq 0,\quad\delta\leq a,
\]
and dividing the last two equations yields Lobacevsky's functional equation\footnote{See \cite[p. 82, Chapter 2 (eq. (16)]{aczel1966lectures} and the references
therein.}
\begin{align}\label{eq: lob fun}
\Phi(\xi-\delta)\Phi(\xi+\delta)&=\Phi(\xi)^2,\quad\xi\geq 0,\quad\delta\leq a,\\
\Phi(0)&=1.
\end{align}
Note, $\Phi$ is continuously differentiable, and strictly positive. Hence, by taking derivatives with respect to $\delta$,
we get
\[
\frac{\Phi'(\xi-\delta)}{\Phi(\xi-\delta)}=\frac{\Phi'(\xi+\delta)}{\Phi(\xi+\delta)},
\]
and by setting $\xi=\delta$, we thus have
\[
\Phi'(2\xi)=\alpha \Phi(2\xi),\quad \Phi(0)=1,\quad 0<\xi\leq a,
\]
where $\alpha=\Phi'(0)/\Phi(0)\in\mathbb R$. We conclude that for some $\beta\geq 0$,
\begin{equation}\label{eq: val sol}
\Phi(\xi)=e^{\beta\xi},\quad 0\leq \xi\leq 2a.
\end{equation}
By \eqref{eq: lob fun} we can extend the exponential solution to $-a\leq \xi<0$: By setting $\xi=0$, we indeed have
\[
\Phi(-\delta)=\frac{\Phi^2(0)}{\Phi(\delta)}=\frac{1}{e^{\beta\delta}}=e^{-\beta\delta}, \quad 0<\delta\leq a.
\]
Similarly, we can succesively extend the validity of \eqref{eq: val sol} to the right, using the functional equation
\eqref{eq: lob fun}. Now that $\Phi(\xi)=e^{\beta \xi}$ for some $\beta\geq 0$, we have, by taking the logarithmic derivative of $\Phi$,
that $\mu(x)/\sigma^2(x)$ is indeed a constant.
\end{proof}
Examples of processes for which the running maximum at drawdown is exponentially distributed, are the following:
\begin{enumerate}
\item \label{a} ($a=-\infty$): Brownian motion with drift $\sigma B_t+\mu t$.
\item \label{b} $(a<\infty$): Reflected Brownian motion with drift, reflected at $-a$,
\item Similar examples as in \ref{a} and \ref{b} can be constructed, where $\mu(x)/\sigma^2(x)$ is constant. These include reflected diffusions.
\end{enumerate}
However, there are processes that do not satisfy Theorem \ref{th: mainx}, but exhibit exponentially distributed gains before $\delta$ drawdowns for specific choices of $\delta$. One can, for instance, let $\mu/\sigma^2$ be constant only on $[-1,\infty)$, and modify $\mu,\sigma^2$ on $[-2,-1)$
in such a way, that the SDE \eqref{eq: tara} has unique global strong solution (or, alternatively, make $-a$ an absorbing boundary). Then, by Proposition \ref{Lehozky}, for any $\delta<1$ the 
maximum at drawdown of size $\delta$ is exponentially distributed. It goes without saying, that there must exist $\delta>1$ for which this is not the case.

More sophisticated examples can be constructed by solving delay differential equations for $\Phi=e^{-2\int_0^\xi \mu(u)/\sigma^2(u)du}$ for individual thresholds $\delta$. E.g., for $\Lambda(\delta)=\delta=1$, and $\Phi(x)=(x+2)/2$ on $[-1,0]$, one can solve the equation
\[
\Phi'(\xi)=\Phi(\xi)-\Phi(\xi-1),\quad \xi\geq 0
\]
subject to $\Phi(\xi)=(\xi+2)/2$ for $\xi\in[-1,0]$. This problem has a unique solution with exponential growth. Due to the initial data, it
cannot be exponential, though.
\section{Lehoczky's proof for spectrally negative L\'evy martingales}
We study in this section the distribution of maximal gains\footnote{This random gain is called ``overshoot" in \cite{golub2016multi}. In this section, we refrain from using this terminology due to its established meaning in the field of L\'evy processes - it is the discrepancy between a certain threshold, and a jump processes' value, passing beyond that threshold.} of processes, prior to the occurrence of a fixed loss $\delta>0$. \cite{golub2016multi,golub2} claim that for a Brownian motion (the toy model of a fair game), this gain is exponentially  distributed, with parameter $\delta$; thus in average, one gains $\delta$ before experiencing a loss of size $\delta$. This result is independent of the volatility of the Brownian motion. In private communication, \cite{golubprivate} raised the question, of whether similar scaling laws hold for other processes, e.g., other diffusion models, or processes with jumps. Such models are useful as benchmark models in the context of certain event-based high-frequency trading algorithms, where the Brownian motion is used as a proxy for an asset, and the location of the maximum suggests the beginning of a trend reversal.\footnote{It goes without saying that the first time, this maximum is attained, is not a stopping time; otherwise one could devise arbitrage strategies that short-sell the asset at the maximum.}

The conjecture that a fair game in average experiences the exact same gain, as is lost later on, may appear intuitive. And this is indeed the case for many continuous-time martingales, those who are time-changed Brownian motions, with a quadratic variation tending to infinity, along almost every path (because the timing is not relevant here). But it is not true for L\'evy martingales, as can seen from Theorem \ref{th: main}. Nevertheless the (exponential) distribution of gains, not its parameter, is universal within the class of spectrally negative L\'evy processes. Besides, the martingale property is not needed to arrive at this result.

After Theorem \ref{th: main} was proved in summer 2019, F. Hubalek kindly pointed out that the result is, in identical form, preceded by \cite{mijatovic2012drawdown}. Our proof is, however, similar to the one of
\cite{lehoczky1977formulas}, and is therefore an alternative, and simpler one. (Finally, we also found a replication of Lehoczky's proof in \cite[Lemma 3.1]{leho3}, however, also this proof is more difficult than ours, due the general discretization used therein).

We assume, that a L\'evy process $X$ is given with downward jumps only, but not equals the negative of a L\'evy subordinator\footnote{This is the natural non-degeneracy condition of \cite[Chapter VII]{Bertoin}, to ensure that the process creeps up to any level.}. Such a process is defined by its L\'evy exponent 
\[
\Psi(\theta):=\frac{1}{t}\log\mathbb E[e^{\theta X_t}],\quad \theta>0,
\]
which is assumed to be of the form
\[
\Psi(\theta)=\mu \theta+\frac{\sigma^2\theta^2}{2}+\int_{(-\infty, 0)}\left(e^{\theta \xi}-1-\theta \xi 1_{[-1,0)}(\xi)\right)\nu(d\xi), \quad\theta>0,
\]
with L\'evy-Khintchine triplet $\mu\in\mathbb R$, $\sigma\in\mathbb R$ and a sigma-finite measure $\nu(d\xi)$ supported
on $(-\infty,0)$, integrating $\xi^2$ near $0$.

The scale function $W$, is the unique absolutely continuous function $[0,\infty)\rightarrow [0,\infty)$
with Laplace transform
\begin{equation}
\int_0^\infty e^{-\theta x}W(x)dx=\frac{1}{\Psi(\theta)},\quad \theta>0.
\end{equation}
Since the processes lack positive jumps, they can only creep up. This assumption is essential to
obtain exit probabilities from compact intervals, and also for the main Theorem \ref{th: main}.
\begin{theorem}\cite[Theorem VII.8]{Bertoin}\label{th: super}
Let $x,y>0$, the probability that $X$ makes its first exit from $[-x,y]$ at $y$ is 
\begin{equation}
\mathbb P[\tau_y<\tau_{-x}]=\frac{W(x)}{W(x+y)}.
\end{equation}
\end{theorem}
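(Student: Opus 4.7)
The plan is to construct a bounded martingale from the scale function $W$ and then apply optional sampling. First I would translate by setting $Y_t := X_t + x$, so that the exit event from $(-x, y)$ becomes exit of $Y$ from $(0, x+y)$, with $Y_0 = x$. Let $\tau := \inf\{t \geq 0 : Y_t \notin (0, x+y)\}$, which is almost surely finite by standard Lévy fluctuation arguments on bounded intervals.

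The core claim is that $\{W(Y_{t \wedge \tau})\}_{t \geq 0}$ is a uniformly bounded $\mathbb{P}^0$-martingale. One route is to start from the $q$-scale function $W^{(q)}$ defined by $\int_0^\infty e^{-\theta z}W^{(q)}(z)\,dz = 1/(\Psi(\theta) - q)$ for $\theta$ sufficiently large, verify that $e^{-q(t \wedge \tau)}W^{(q)}(Y_{t \wedge \tau})$ is a martingale (this is effectively built into the Laplace transform together with the strong Markov property), and then let $q \downarrow 0$, invoking continuity of $q \mapsto W^{(q)}$ on compact sets together with dominated convergence. Boundedness on $[0,x+y]$ is automatic from continuity of $W$.

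Next I would exploit the no-positive-jumps hypothesis: since $X$ creeps upward, on $\{\tau = \tau_y\}$ we have $Y_\tau = x+y$ almost surely, while on $\{\tau = \tau_{-x}\}$ the process must jump strictly below $-x$, so $Y_\tau < 0$. Extending $W$ by $0$ on $(-\infty, 0)$, which is the standard convention and the only one consistent with its definition via the Laplace transform, yields the pointwise identity $W(Y_\tau) = W(x+y)\,\mathbf{1}_{\{\tau = \tau_y\}}$.

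Optional sampling then gives $W(x) = W(Y_0) = \mathbb{E}^0[W(Y_\tau)] = W(x+y)\,\mathbb{P}^0[\tau_y < \tau_{-x}]$, which rearranges to the stated formula. The main obstacle is justifying the martingale property of $W(Y_{\cdot \wedge \tau})$: since $W$ is only known to be absolutely continuous and need not be $C^2$ when $X$ has jumps, a direct It\^o argument is unavailable, and one must proceed either through the $q$-scale function detour sketched above or via a resolvent-density identification of $W$ with a harmonic function for $X$ killed on leaving $(0, \infty)$.
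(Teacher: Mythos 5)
The paper does not prove this statement at all: it is quoted verbatim from Bertoin (Theorem VII.8) and used as a black box, so there is no in-paper argument to compare yours with; the relevant benchmark is Bertoin's own proof, which identifies $W$ (up to a constant) with the renewal function of the descending ladder height process via the Wiener--Hopf factorization and then derives the exit probability by excursion/duality arguments. Your optional-sampling architecture is a legitimate and well-known alternative presentation, but as written it has a genuine gap at its core: the martingale property of $W(Y_{t\wedge\tau})$ is essentially equivalent to the theorem itself, and your first proposed route for it is circular. The statement that $e^{-q(t\wedge\tau)}W^{(q)}(Y_{t\wedge\tau})$ is a martingale is \emph{not} ``built into the Laplace transform together with the strong Markov property'': the standard derivation of that martingale property proceeds by applying the strong Markov property to the identity $\mathbb E^x\bigl[e^{-q\tau_a^+}\mathbf 1_{\{\tau_a^+<\tau_0^-\}}\bigr]=W^{(q)}(x)/W^{(q)}(a)$, i.e.\ to the ($q$-version of the) two-sided exit formula you are trying to prove. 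The Laplace-transform definition of $W^{(q)}$ alone gives you no harmonicity; you must first connect $W^{(q)}$ to the path behaviour of $X$, and every standard way of doing that (renewal function of the ladder height process, potential density of the process killed below $0$) already yields the exit identity directly, making the optional-stopping step redundant rather than load-bearing. Your second route (resolvent-density identification) is exactly this non-circular connection, but then it \emph{is} Bertoin's proof and needs to be carried out, not merely named.

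A secondary, fixable inaccuracy: on $\{\tau=\tau_{-x}\}$ you assert the process ``must jump strictly below $-x$,'' so $Y_\tau<0$. A spectrally negative process with a Gaussian component creeps downward, so $\mathbb P[Y_\tau=0,\ \tau=\tau_{-x}]>0$ is possible; the pointwise identity $W(Y_\tau)=W(x+y)\mathbf 1_{\{\tau=\tau_y\}}$ then survives only because in that case $X$ has unbounded variation and hence $W(0)=0$, while in the bounded-variation case ($W(0)>0$) one must argue separately that the first passage time below a level does not land exactly on it. This case distinction needs to be made explicit, since with the convention $W\equiv 0$ on $(-\infty,0)$ the function $W$ is discontinuous at $0$ precisely when $W(0)>0$.
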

We are ready to state and proof the main theorem:
\begin{theorem}\label{th: main}
For a spectrally negative process, not equals to a negative subordinator, the maximal gain $M^\delta$ before a $\delta$-loss is exponentially distributed with parameter equals the logarithmic derivative of the scale function, that is,
\[
\mathbb P[M^\delta\geq \xi]=e^{-\frac{W'(\delta+)}{W(\delta)}\xi}.
\]
\end{theorem}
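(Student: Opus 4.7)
The plan is to mimic Lehoczky's proof of Proposition \ref{Lehozky}, replacing the diffusion scale-ratio by the two-sided exit formula from Theorem \ref{th: super}. Fix $\xi>0$ and partition $[0,\xi]$ by setting $\xi_k:=k\xi/n$ for $k=0,1,\dots,n$. Since $X$ lacks positive jumps and hence creeps upward, the first hitting times $T_k:=\inf\{t\geq 0\mid X_t=\xi_k\}$ satisfy $X_{T_k}=\xi_k$ on $\{T_k<\infty\}$. Let $A_k$ denote the event that, starting from $X_{T_{k-1}}=\xi_{k-1}$, the process reaches $\xi_k$ before hitting $\xi_{k-1}-\delta$. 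The strong Markov property at $T_{k-1}$, spatial homogeneity, and Theorem \ref{th: super} combine to give
\[
\mathbb P\!\left[\bigcap_{k=1}^n A_k\right]=\prod_{k=1}^n\mathbb P[A_k]=\left(\frac{W(\delta)}{W(\delta+\xi/n)}\right)^n.
\]

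Next I would establish the sandwich $\{M^\delta\geq\xi\}\subseteq \bigcap_{k=1}^n A_k\subseteq \{M^{\delta+\xi/n}\geq\xi\}$. For the left inclusion, on $\{M^\delta\geq\xi\}$ the drawdown stays strictly below $\delta$ up to the passage time $T_\xi$; during each segment $[T_{k-1},T_k]$ the running maximum lies in $[\xi_{k-1},\xi_k)$, forcing $X_t>M_t-\delta\geq \xi_{k-1}-\delta$, which together with $X_{T_k}=\xi_k$ yields $A_k$. For the right inclusion, on $\bigcap_k A_k$ the process stays above $\xi_{k-1}-\delta$ with running max below $\xi_k$ over each segment, so the drawdown never exceeds $\delta+\xi/n$ before $T_\xi$.

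Taking $n\to\infty$ closes the argument: the middle term $\exp(-n\log(W(\delta+\xi/n)/W(\delta)))$ converges to $\exp(-\xi W'(\delta+)/W(\delta))$ by the definition of the right derivative, while the upper bound $\mathbb P[M^{\delta+\xi/n}\geq\xi]$ tends to $\mathbb P[M^\delta\geq\xi]$ by right-continuity of $\delta\mapsto M^\delta$, inherited from the c\`adl\`ag character of the drawdown process $M-X$. The main obstacle will be the technical care needed for strict versus non-strict inequalities at the segment boundaries---boundary events have vanishing probability in the limit thanks to the continuity of the exponential law being proved---together with recording the existence of $W'(\delta+)$, which is a standard fact for scale functions of spectrally negative L\'evy processes.
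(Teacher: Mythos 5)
Your proposal follows essentially the same route as the paper: both decompose $\{M^\delta\geq\xi\}$ into the events that the process climbs each increment $\xi/n$ before suffering a $\delta$-drop from the running start point, apply spatial homogeneity, the strong Markov property and the two-sided exit formula of Theorem \ref{th: super} to obtain $\left(W(\delta)/W(\delta+\xi/n)\right)^n$, and pass to the limit via the right-differentiability of $W$ at $\delta$. The only (cosmetic) difference is how the approximation of the event is closed: you use the two-sided sandwich $\{M^\delta\geq\xi\}\subseteq\bigcap_k A_k\subseteq\{M^{\delta+\xi/n}\geq\xi\}$ together with right-continuity in $\delta$, whereas the paper asserts that the decreasing intersection of these events over $n$ equals $\{M^\delta\geq\xi\}$ exactly and invokes continuity from above.
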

\begin{proof}[Proof of Theorem \ref{th: main}]
The proof is inspired by \cite{golub2016multi}, however, the exact same idea can be traced back to \cite{lehoczky1977formulas} in the general context of univariate diffusions processes. Let $A_{k,n}$ be the event that $X$ reaches $k\xi/n$ before $-\delta+(k-1)/2^n \xi$ ($k=1,\dots,2^n$). Then
Then $M^\delta\geq \xi$ can be approximated by $\bigcap_{k=1}^n A_{k,n}$, which are decreasing for increasing $n$. In other words,
\[
\{M^\delta\geq \xi\}=\bigcap_{n=1}^\infty\bigcap_{k=1}^{2^n} A_{k,n}
\]
Therefore
\[
\mathbb P[M^\delta\geq \xi]=\lim_{n\rightarrow\infty} \mathbb P\left[\bigcap_{k=1}^{2^n} A_{k,n}\right].
\]
Due to state-independence of the process (translation invariance) and the Markov property
\[
\mathbb P[A_{1,n}]\times \prod_{k=2}^{2^n}\mathbb P[A_{k,n}\mid A_{k-1,n} ]=(\mathbb P[A_{1,n}])^{2^n}=\left(\frac{W(\delta)}{W(\delta+\xi/2^n)}\right)^{2^n},
\]
where the last identity follows from Theorem \ref{th: super}. Since $W$ is differentiable from the right at $\delta$, applying L'Hospital's rule yields
\[
\log\mathbb P[M^\delta\geq \xi]=\lim_{n\rightarrow\infty}\log (\mathbb P[A_{1,n}])^{2^n}=-\xi \frac{W'(\delta+)}{W(\delta)}.
\]
\end{proof}
\begin{remark}
Theorem \ref{th: main} implicitly requires right-differentiability of the scale functions, which is for free, because it can be rewritten as an integral
of the tail of some finite measure, see \cite[Chapter VII]{Bertoin}. However, in many models, full $C^1$-regularity is guaranteed (cf.~\cite[Lemma 2.4]{kuznetsov2012theory}).
\end{remark}
\subsection{Examples}
The scale functions for the below processes are taken from review article of \cite{hubalek}.
\begin{example}[Compound Poisson Process]
Assume we have a compound Poisson process with negative exponentially distributed jumps,
\[
X_t=ct-\sum_{k=0}^{N_t^\lambda}\xi_k,\quad \xi_k\quad \text{i.i.d. and }\sim \mathcal E(\mu),\quad c-\lambda/\mu>0.
\]
We get
\[
W(x)=\frac{1}{c}\left(1+\frac{\lambda}{c\mu-\lambda}(1-e^{-(\mu-\lambda/c)x})\right).
\]
Clearly $W\in C^1(0,\infty)$, 
\[
W'(x)=\frac{\lambda}{c^2}e^{-(\mu-\lambda/c)x}.
\]
Therefore, by Theorem \ref{th: main}
\[
M^\delta\sim \mathcal E\left(\frac{\lambda/c}{e^{\delta(\mu-\lambda/c)}-\frac{\lambda/c}{\mu-\lambda/c}}\right),\quad\lim_{\delta\downarrow 0}\mathbb E[M^\delta]=\lambda/c>0,\quad \lim_{\delta\uparrow \infty}\mathbb E[M^\delta]=\mu-\lambda/c<\infty.
\]
\end{example}
Unlike the previous example, the following two examples exhibit the same qualitative dependence on the threshold $\delta$, as the standard Brownian motion, where $M^\delta\sim \mathcal E(1/\delta)$: when $\delta\rightarrow 0$, the average maximum at drawdown of size $\delta$ tends to $0$, and when $\delta\rightarrow\infty$, this average goes to infinity. 
\begin{example}[Brownian motion with drift]
A Brownian motion with drift $\mu>0$ and volatility $\sigma$,
\[
X_t=\mu t+\sigma B_t
\]
has scale function
\[
W(x)\sim e^{-\mu x/\sigma^2}\sinh(\sqrt{\mu} x/\sigma^2).
\]
hence
\[
\frac{W'(x)}{W(x)}=\frac{-\mu/\sigma^2 \sinh(\sqrt{\mu} x/\sigma^2) +\sqrt\mu/\sigma^2\cosh(\sqrt{\mu} x/\sigma^2)}{\sinh(\sqrt{\mu} x/\sigma^2).}
\]
Therefore, by Theorem \ref{th: main} (see e.g., \cite{golub2016multi}),
\[
M^\delta\sim \mathcal E\left(\mu/\sigma^2\left(\coth(\sqrt{\mu} \delta/\sigma^2)-1\right)\right).
\]
\end{example}
\begin{example}[\cite{caballero2006conditioned}]
This is a L\'evy process without diffusion component, defined by its L\'evy measure
\[
\nu(d\xi)=\frac{e^{(\beta-1)\xi}}{(e^\xi-1)^{\beta+1}},\quad \xi<0,
\]
where $\beta\in (1,2)$, and its Laplace exponent,
\[
\Psi(\theta)=\frac{\Gamma(\theta+\beta)}{\Gamma(\theta)\Gamma(\beta)},\quad\theta>0.
\]
The process exhibits Infinite variation jumps, and drifts to $-\infty$, because $\Psi'(0)<0$. The Scale function is
\[
W(x)=(1-e^{-x})^{\beta-1}
\]
Using Theorem \ref{th: main} we thus get
\[
M^\delta\sim \mathcal E\left(\frac{\beta-1}{e^\delta-1}\right),\quad\mathbb E[M^\delta]=\frac{e^{\delta}-1}{\beta-1}
\]
\end{example}
The asymptotic behaviour of the logarithmic derivative of the scale function of a spectrally negative L\'evy process can be characterized,
using the asymptotic behaviour of $W$ and $W'$, cf.~\cite[Chapter 3]{kuznetsov2012theory}. For instance, $W(0)=W(0+)=0$, if and only if the process is of infinite variation. In the case of finite variation, we can write
the process as $\delta t-J_t$, where $J$ is a subordinator; and then $W(0)=1/\delta>0$. Furthermore, $W'(0+)=\infty$, if a diffusion component is present, or if the L\'evy measure is infinite. These general findings are
consistent with the three examples.
\bibliographystyle{apalike} 
%\bibliography{references}

\begin{thebibliography}{}

\bibitem[Acz{\'e}l, 1966]{aczel1966lectures}
Acz{\'e}l, J. (1966).
\newblock {\em Lectures on functional equations and their applications},
  volume~19.
\newblock Academic press.

\bibitem[Bertoin, 1996]{Bertoin}
Bertoin, J. (1996).
\newblock {\em L{\'e}vy processes}, volume 121.
\newblock Cambridge university press Cambridge.

\bibitem[Borodin and Salminen, 2012]{borodin2012handbook}
Borodin, A.~N. and Salminen, P. (2012).
\newblock {\em Handbook of {B}rownian motion-facts and formulae}.
\newblock Birkh{\"a}user.

\bibitem[Caballero and Chaumont, 2006]{caballero2006conditioned}
Caballero, M.~E. and Chaumont, L. (2006).
\newblock Conditioned stable {L}{\'e}vy processes and the {L}amperti
  representation.
\newblock {\em Journal of Applied Probability}, 43(4):967--983.

\bibitem[Golub, 2014]{golubprivate}
Golub, A. (2014).
\newblock Private communication.
\newblock Dublin.

\bibitem[Golub et~al., 2016]{golub2016multi}
Golub, A., Chliamovitch, G., Dupuis, A., and Chopard, B. (2016).
\newblock Multi-scale representation of high frequency market liquidity.
\newblock {\em Algorithmic Finance}, 5(1-2):3--19.

\bibitem[Golub et~al., 2018]{golub2}
Golub, A., Glattfelder, J.~B., and Olsen, R.~B. (2018).
\newblock The alpha engine: designing an automated trading algorithm.
\newblock In {\em High-Performance Computing in Finance}, pages 49--76. Chapman
  and Hall/CRC.

\bibitem[Graversen et~al., 2000]{graversen2000extension}
Graversen, S.~E., Shiryaev, A.~N., et~al. (2000).
\newblock An extension of {P}. {L}{\'e}vy's distributional.
\newblock {\em Bernoulli}, 6(4):615--620.

\bibitem[Hubalek and Kyprianou, 2011]{hubalek}
Hubalek, F. and Kyprianou, E. (2011).
\newblock Old and new examples of scale functions for spectrally negative
  {L}{\'e}vy processes.
\newblock In {\em Seminar on stochastic analysis, random fields and
  applications VI}, pages 119--145. Springer.

\bibitem[Kuznetsov et~al., 2012]{kuznetsov2012theory}
Kuznetsov, A., Kyprianou, A.~E., and Rivero, V. (2012).
\newblock The theory of scale functions for spectrally negative l{\'e}vy
  processes.
\newblock In {\em L{\'e}vy matters II}, pages 97--186. Springer.

\bibitem[Landriault et~al., 2017]{leho3}
Landriault, D., Li, B., Zhang, H., et~al. (2017).
\newblock On magnitude, asymptotics and duration of drawdowns for l{\'e}vy
  models.
\newblock {\em Bernoulli}, 23(1):432--458.

\bibitem[Lehoczky, 1977]{lehoczky1977formulas}
Lehoczky, J.~P. (1977).
\newblock Formulas for stopped diffusion processes with stopping times based on
  the maximum.
\newblock {\em The Annals of Probability}, pages 601--607.

\bibitem[Mijatovi{\'c} and Pistorius, 2012]{mijatovic2012drawdown}
Mijatovi{\'c}, A. and Pistorius, M.~R. (2012).
\newblock On the drawdown of completely asymmetric {L}{\'e}vy processes.
\newblock {\em Stochastic Processes and their Applications},
  122(11):3812--3836.

\bibitem[Perry et~al., 2004]{perry2004first}
Perry, D., Stadje, W., and Zacks, S. (2004).
\newblock The first rendezvous time of {B}rownian motion and compound
  {P}oisson-type processes.
\newblock {\em Journal of applied probability}, 41(4):1059--1070.

\bibitem[Taylor, 1975]{Taylor}
Taylor, H.~M. (1975).
\newblock A stopped {B}rownian motion formula.
\newblock {\em The Annals of Probability}, 3(2):234--246.

\bibitem[Yamada et~al., 1971]{yamada1971uniqueness}
Yamada, T., Watanabe, S., et~al. (1971).
\newblock On the uniqueness of solutions of stochastic differential equations.
\newblock {\em Journal of Mathematics of Kyoto University}, 11(1):155--167.

\end{thebibliography}

\end{document}